\def\be{\begin{equation}}
\def\ee{\end{equation}}
\def\ba{\begin{eqnarray}}
\def\ea{\end{eqnarray}}
\def\lb{\label}
\def\nn{\nonumber}
\def\ni{\noindent}
\def\cal{\mathcal}
\newcounter{theorem}
\newtheorem{prop}[theorem]{Proposition}
\newtheorem{theor}[theorem]{Theorem}
\begin{document}
\begin{titlepage}
\title[]
{Representations of finite-dimensional quotient algebras of the $3$-string braid group}

\author{Pavel Pyatov \&  Anastasia Trofimova}

\address{Pavel Pyatov:
National Research University Higher School of Economics
20 Myasnitskaya street, Moscow 101000, Russia \&
Bogoliubov Laboratory of Theoretical Physics, Joint Institute
for Nuclear Research, 141980 Dubna, Moscow Region, Russia}
\email{pyatov@theor.jinr.ru}

\address{Anastasia Trofimova:
National Research University Higher School of Economics
20 Myasnitskaya street, Moscow 101000, Russia \&
Center for Advanced Studies, Skolkovo Institute of Science and Technology,
Moscow, Russia}
\email{nasta.trofimova@gmail.com}

\thanks{
This research was carried out in frames of the HSE University Basic Research Program and
was supported jointly by the  Russian Academic Excellence Project '5-100' and by the
grant of RFBR No.16-01-00562 a.}


\begin{abstract}
We consider quotients of the group algebra of the $3$-string braid group $B_3$ by $p$-th order generic polynomial relations on the elementary braids. In cases $p=2,3,4,5$ these quotient algebras are finite dimensional. We give semisimplicity criteria for these algebras and present explicit formulas for all their irreducible representations.
\vspace{5mm}

\textbf{Keywords:}
Braid group, irreducible representations, semisimplicity.

\end{abstract}

\maketitle

\end{titlepage}

\section*{Introduction}
\addcontentsline{toc}{section}{Introduction}

A classical theorem by H.S.M. Coxeter states that factorizing the $n$-strand braid group $B_n$ by $p$-th order relation $\sigma ^p =1$ on its elementary braid generator $\sigma$ results in a finite quotient if and only if
\be
\lb{1}
1/n\,+\,1/p\,>\,1/2\,.
\ee
In case of $B_3$ such factorization gives  finite quotient groups for $p=2,3,4,5$, of orders, respectively, 6, 24,   96, and  600 \cite{Cox}.  Generalizing this setting one can consider quotients of the group algebra ${\Bbb C}[B_n]$
obtained by imposing $p$-th order monic polynomial relation on the elementary braids.
Under condition (\ref{1}) the resulting quotient algebras are finite dimensional and, by the Tits deformation theorem (see \cite{CurtisReiner}, $\mathsection 68$, or \cite{HR}, section 5), in a generic situation  they are isomorphic to the group algebras of the corresponding  Coxeter's quotient groups and, hence, semisimple. As a next step it would be interesting to identify the semisimplicity conditions and to describe explicitly irreducible representations of the finite dimensional quotients.

A significant progress in this direction have been achieved by I. Tuba and H. Wenzl. In paper \cite{TW} they have classified  all the irreducible representations of $B_3$ in dimensions $\leq 5$. Their classification scheme in dimensions $\leq 4$  gives all the irreducible representations for the quotients in cases $n=3$, $p=2,3,4$, and describes their semisimplicity conditions. However the  ${\Bbb C}[B_3]$ quotient algebras for $p=5$ admit irreducible representations of dimensions up to $6$ and the classification in \cite{TW} does not cover them. In this note we construct all the $6$-dimensional irreducible representations of these algebras and identify their semisimplicity conditions. We are working in the diagonal basis for the first elementary braid generator $g_1$, and we restrict our considerations to the case where all $p$ roots of its minimal polynomial are distinct. For the sake of completeness we present formulas for representations from I. Tuba and H. Wenzl list in this basis too.

Our paper is organized  as follows. In the next section we fix notations and derive preliminary results on possible values of a central element of $B_3$ in low dimensional irreducible representations ($d\leq 6$). Section 2 contains our main results: theorem \ref{theorem} ---
criteria of semisimplicity of the $p=2,3,4,5$ quotients of ${\Bbb C}[B_3]$, and proposition \ref{prop2} ---
explicit formulae for all their irreducible representations.
\medskip

Before going on with the considerations let us describe in brief related approaches and results.

In \cite{W} B. Westbury suggested approach to representation theory of $B_3$ using representations of a particular quiver. It was subsequently used by L. Le Bruyn to construct Zariski dense rational parameterizations of the irreducible representations of $B_3$ of any dimension \cite{Bruyn1,Bruyn2}. This approach has proved to be effective in treating a problem of braid reversion (see \cite{Bruyn1}). However  it does not provide representation's semisimplicity criteria. A $5$-dimensional variety of the irreducible $6$-dimensional representations of $B_3$ constructed below belongs to a $8$-dimensional family of $B_3$-representations of type 6b (see Fig.1 in \cite{Bruyn1}).

For a more general case of $B_n$, $n>3$, series of irreducible representations related with the Iwahori-Hecke  ($p=2$ case) and Birman-Murakami-Wenzl algebras ($p=3$ case with additional restrictions) are well investigated (for a review, see \cite{LR}). Some other particular families of the $B_n$-representations have been found in \cite{FLSV,AK}.

In yet another line of research M. Brou\'e, C. Malle and R. Rouquier \cite{BMR1,BMR2} generalized notions of the braid group and of the Hecke algebra associated not only to Coxeter group, but to an arbitrary finite complex reflection group $W$. Their {\em generic Hecke algebra} is defined over certain polynomial ring $R={\Bbb Z}[\{u_i\}]$. Brou\'e, Malle and Rouquer conjectured that generic Hecke algebra is a free module of rank $|W|$ over its ring of definition. This conjecture by now has been verified in many  cases (see \cite{MM,M1,M2,Ch1} and references therein), although not in all. The algebras $Q_X$ (\ref{quot-alg}) we are dealing with in this work are specializations of particular generic Hecke algebras under homomorphism $R \rightarrow {\Bbb C}$ that assigns certain complex values to the variables $u_i$. Importantly for us the freeness conjecture is proved in all these particalar cases \cite{Ch1} and therefore, dimensions of the algebras $Q_X$ coincide with the cardinalities of their corresponding Coxeter groups.

\section{Braid group $B_3$ and its quotients: spectrum of elementary braids}
\addcontentsline{toc}{section}{Braid group $B_3$ and its quotients}

The three strings braid group $B_3$ is generated by a pair of {\em elementary braids} -- $g_1$ and $g_2$ -- satisfying the braid relation
\be
\lb{braid}
g_1 g_2 g_1 = g_2 g_1 g_2 .
\ee
Alternatively it can be given in terms of generators
\be
\lb{gen-ab}
a=g_1 g_2,\quad b=g_1 g_2 g_1 ,
\ee
and relations
\be
\lb{braid2}
a^3 = b^2 = c\, ,
\ee
where $c= (g_1 g_2)^3= (g_1 g_2 g_1)^2$ is a central element of $B_3$ which generates the center ${\Bbb Z}(B_3)$. Thus,  the quotient group $B_3/{\Bbb Z}(B_3) = \langle a,b|\, a^3=b^2=1\rangle$ is the free product of two cyclic groups ${\Bbb Z}_3 * {\Bbb Z}_2$ which is known to be isomorphic to $PSL(2,{\Bbb Z})$.
\smallskip

Let  $X$ be a set of pairwise different nonzero complex numbers:
\be
\lb{X}
X = \{x_1,x_2,\dots ,x_n\}, \qquad x_i \in {\Bbb C}\setminus \{0\}, \;\; x_i\neq x_j \; \forall i\neq j.
\ee
In this note we consider finite dimensional quotient algebras of the group algebra ${\Bbb C}[B_3]$ obtained by imposing following polynomial conditions on the elementary braids:\footnote{In the braid group elementary braids $g_1$ and $g_2$ are conjugate to each other and, hence, conditions on them are identical.}
\be
\lb{poly-rel}
P_{X}(g) = \prod_{i=1}^{n=|X| }(g- x_i 1) = 0, \qquad \mbox{where $g$ is either $g_1$, or $g_2$.}
\ee
As was already mentioned in the introduction the quotient algebras
\be
\lb{quot-alg}
Q_{X} = {\Bbb C}[B_3]/\langle P_{X}(g)\rangle .
\ee
are finite dimensional iff $|X|=n<6$.  With a particular choice of polynomials $P_{X}(g)=g^n-1$ they are the group algebras of the quotient groups $B_3/\langle g^n \rangle$ and, by the Tits deformation argument,
$Q_{X} \simeq {\Bbb C}[B_3/\langle g^n\rangle]$  for  $n<6$ and for generic choice of $x_i\in X$
and, therefore, in a generic situation $Q_{X}$ is semisimple.

In the next section we will classify irreducible representations of these algebras. It turns out that their dimensions are less or equal to $6$. In the rest of this section we will show that in these irreducible  representations the spectra  of the central element $c$ (\ref{braid}) and of  generators $a$ and $b$ (\ref{gen-ab}) are, up to a discrete factor, defined by the eigenvalues $x_i$ of the elementary braids.
\smallskip

Let $V$ be a finite dimensional linear space,  $\dim V = d$.
Let $\rho_{X,V}$ be a family of irreducible representations $Q_{X} \rightarrow {\rm End}(V)$. We will assume that their characters
are continuous functions of parameters $x_i\in X$.\footnote{All representations constructed in the next section satisfy the continuity condition.}
Throughout this section  we also assume that $d \geq n$ and that the minimal polynomials of operators $\rho_{X,V}(g_{1,2})$ coincide with $P_{X}$. The latter assumptions do not cause any loss of generality since {\bf a)} all roots of the characteristic polynomials of  $\rho_{X,V}(g_{1,2})$ belong to $X$, and {\bf b)} given a family $\rho_{X',V}$ we can treat it as a family of representations of the quotient algebras $Q_{X}$ of a minimal possible set $X\subset X'$ removing from $X'$ all the elements which do not show up in the characteristic polynomials of $\rho_{X',V}(g_{1,2})$.
The characteristic polynomial of elementary braids $g_{1,2}$ in representation $\rho_{X,V}$ then has a form
\be
\lb{charpol}
\Pi_{\rho}(g) =\prod_{i=1}^{n=|X|} (g- x_i )^{m_i}, \qquad \mbox{where}\;\; m_i\in \mathbb{N}^+:\; \sum_{i=1}^n m_i =d.
\ee
In particular, $\det \rho_{X,V}(g_{1,2}) = \prod_{i=1}^n x_i^{m_i}$.
\smallskip

Denote
\be
\lb{ABC}
A:=\rho_{X,V}(a), \quad B:=\rho_{X,V}(b), \quad \rho_{X,V}(c):=C_{\rho}\, {\rm Id}_V.
\ee
Here we have taken into account that, by Schur's lemma, central element $c$ acts in the irreducible representation as a scalar operator.
Calculating determinant of $\rho_{X,V}(c)$ one finds relation
\be
\lb{det}
\left({\textstyle \prod_{i=1}^n x_i^{m_i}}\right)^6 = \left(C_{\rho}\right)^d .
_{\phantom{A_{A_{A}}}}
\ee
By (\ref{braid2}) operators $A$ and $B$ satisfy equalities
\be
\lb{AB}
A^3 = B^2 = C_{\rho}\, {\rm Id}_V.
\ee

Notice that $A$ and $B$ can not be scalar, otherwise $\rho_{X,V}(g_1)$ and $\rho_{X,V}(g_2)$ have common basis of eigenvectors and the representation $\rho_{X,V}$ is reducible. Thus, $A$ and $B$ should have at least two  different eigenvalues taking values in sets
\be
\lb{spec}
\mbox{Spec} A \subset C_{\rho}^{1/3}\cdot\{1,\nu,\nu^{-1}\},\;\; \nu:=e^{2\pi \mathrm{i} /3},\qquad
\mbox{Spec} B \subset C_{\rho}^{1/2}\cdot\{1,-1 \}\, .
\ee
The following proposition describes explicitly the spectrum of operators $A$ and $B$ in low dimensional representations.

\begin{prop}
\lb{prop1}
Let $\rho_{X,V}: Q_X \rightarrow {\rm End}(V)$ be a family of irreducible representations of algebras $Q_{X}$ (\ref{quot-alg}) such that\\
{ a)~} their characters are continuous functions of parameters $x_i\in X$;\\
{b)~} characteristic and minimal polynomials of the elementary braids $\rho_{X,V}(g_{1,2})$ are given, respectively, by
$\Pi_{\rho}$ (\ref{charpol}) and $P_{X}$ (\ref{poly-rel}).

Let $A$, $B$, $C_{\rho}$  be as defined in (\ref{ABC}). Denote $\nu :=e^{2\pi\mathrm{i}/3}$, and introduce notation $e_{k}({ X})$ for $k$-th elementary symmetric polynomial in the set of variables $X=\{x_i\}_{i=1,\dots,n}$.

Then for $n=|X|\leq 5$ and $d=\dim V\leq 6$ coefficient $C_{\rho}$ and eigenvalues of operators $A$ and $B$ can take following values.
\ba
\nn
\phantom{a}\hspace{-10mm}
d=n=2: &\hspace{-4mm}& C_{\rho}= -e_{2}({X})^3,\\
\lb{d=n=2}
\phantom{a}\hspace{-20mm}
&\hspace{-10mm}&
Spec \,A =  -e_{2}({X}) \!\cdot\! \{\nu,\nu ^{-1}\}, \quad
Spec \,B = {\mathrm i}\, e_{2}({X})^{3\over 2}\!\cdot\! \{1, -1 \};
\\[5pt]
\nn
\phantom{a}\hspace{-20mm}
d=n=3:&\hspace{-10mm}&  C_{\rho}= e_{3}({X})^2,\\
\lb{d=n=3}
\phantom{a}\hspace{-20mm}
&\hspace{-10mm}&
Spec \,A =  e_{3}({X})^{2\over 3}\!\cdot\! \{1,\nu,\nu ^{-1}\}, \quad
Spec \,B = e_{3}({X})\!\cdot\! \{1, -1^{\sharp 2}\};
\\[5pt]
\nn
\phantom{a}\hspace{-20mm}
d=n=4: &\hspace{-10mm}&  \mbox{for any root $h({X}):=\sqrt[2]{e_{4}({X})}$:}\quad
C_{\rho}= h({X})^{3},\\
\lb{d=n=4}
\phantom{a}\hspace{-20mm}
&\hspace{-10mm}&
Spec \,A = h({X})\!\cdot\!  \{1^{\sharp 2},\nu,\nu ^{-1}\}, \quad
Spec \,B = h({X})^{3\over 2}\!\cdot\! \{1^{\sharp 2}, -1^{\sharp 2}\};
\\[5pt]
\nn
\phantom{a}\hspace{-20mm}
d=n=5: &\hspace{-10mm}& \mbox{for any root $f({X}):=\sqrt[5]{e_{5}({X})}$:}\quad C_{\rho}= f({X})^{6},\\
\lb{d=n=5}
\phantom{a}\hspace{-20mm}
&\hspace{-10mm}&
Spec \,A = f({X})^{2}\!\cdot\! \{1,\nu^{\sharp 2},(\nu ^{-1})^{\sharp 2}\}, \quad
Spec \,B = f({X})^{3}\!\cdot\! \{1^{\sharp 3}, -1^{\sharp 2}\};
\ea
\ba
\nn
&
\phantom{a}\hspace{-54mm}
d=6, \; n=5,\;  m_i=2,\; 1\leq i \leq 5:\quad\;  C_{\rho}= - x_i e_{5}({X}), &
\\
\lb{d=6,n=5}
&
\phantom{a}\hspace{-10mm}
Spec \,A = -\sqrt[3]{x_i e_5({X})}\! \cdot\! \{1^{\sharp 2},\nu^{\sharp 2},(\nu ^{-1})^{\sharp 2}\}, \quad
Spec \,B = {\mathrm i} \sqrt[2]{x_i e_5({X})}\!\cdot\! \{1^{\sharp 3}, -1^{\sharp 3}\}.
&
\ea
\end{prop}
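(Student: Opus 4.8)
\noindent\emph{Proof plan.} The plan is to extract every value in (\ref{d=n=2})--(\ref{d=6,n=5}) from three ingredients already in hand -- the relations $A^3=B^2=C_\rho\,\mathrm{Id}_V$ of (\ref{AB}), the spectral containments (\ref{spec}), and a pair of determinant identities -- and to use the continuity hypothesis (a) only for the final, genuinely representation--theoretic step. First I would record the determinant identities. Since $g_1$ and $g_2$ are conjugate in $B_3$, one has $\det\rho_{X,V}(g_1)=\det\rho_{X,V}(g_2)=\Delta$, where $\Delta:=\prod_{i=1}^n x_i^{m_i}$ is fixed by the characteristic polynomial (\ref{charpol}). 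As $a=g_1g_2$ and $b=g_1g_2g_1$, this gives at once $\det A=\Delta^2$ and $\det B=\Delta^3$, to be used together with $C_\rho^{\,d}=\Delta^6$ from (\ref{det}).

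Next I would introduce multiplicities. By (\ref{AB})--(\ref{spec}) the eigenvalues of $A$ are the three cube roots of $C_\rho$, say $\zeta,\zeta\nu,\zeta\nu^2$ with $\zeta^3=C_\rho$, occurring with multiplicities $a_0,a_1,a_2\ge 0$ summing to $d$, while $B$ has eigenvalues $\pm\eta$ with $\eta^2=C_\rho$ and multiplicities $b_0,b_1\ge 0$ summing to $d$; since $A,B$ are non-scalar in an irreducible representation, each has at least two distinct eigenvalues. Substituting into the two determinant identities gives
\[ \zeta^{\,d}\,\nu^{\,a_1+2a_2}=\Delta^2,\qquad (-1)^{b_1}\,\eta^{\,d}=\Delta^3 . \]
Combining these with $\zeta^3=\eta^2=C_\rho$ and $C_\rho^{\,d}=\Delta^6$, and using $\Delta=e_n(X)$ in the cases $d=n$ (respectively $\Delta=x_i\,e_5(X)$ in the exceptional case $d=6$, $n=5$ with $m_i=2$), one solves this small system: it fixes $C_\rho$ to the value listed in each line of the statement, the phase $\nu^{a_1+2a_2}$ forcing $a_1\equiv a_2\pmod 3$ and the sign $(-1)^{b_1}$ fixing the parity of $b_1$. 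For $d=2,3$ these congruences, together with $\sum a_j=d$ and ``at least two distinct eigenvalues'', already single out a unique admissible vector (for $d=3$ only $(a_0,a_1,a_2)=(1,1,1)$ and $(b_0,b_1)=(1,2)$ survive), so the $A$- and $B$-spectra are completely determined.

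The difficulty appears for $d=4,5,6$, where the determinant relations still pin down $C_\rho$ but leave \emph{genuine} freedom in the multiplicity vectors: several nonnegative $(a_0,a_1,a_2)$ satisfy $a_1\equiv a_2\pmod 3$ and keep $A$ non-scalar (for $d=4$ both $(2,1,1)$ and $(0,2,2)$ pass the test), and for $d=5,6$ the parity condition likewise admits more than one $(b_0,b_1)$. This under-determination is the main obstacle, and it cannot be removed by determinants alone. To resolve it I would invoke hypothesis (a): the multiplicities are integer--valued, hence locally constant, functions of $X$, so they are constant on the connected parameter space of admissible sets $X$. Evaluating the family at the symmetric specialization in which the $x_i$ are the $n$-th roots of unity -- where $Q_X$ is the group algebra of the finite Coxeter quotient $B_3/\langle g^n\rangle$ of order $6,24,96,600$, whose irreducible characters are classical -- lets one read off the multiplicities of the images of $a$ and $b$, and transporting back by continuity selects the balanced patterns $\{1^{\sharp 2},\nu,\nu^{-1}\}$, $\{1,\nu^{\sharp 2},(\nu^{-1})^{\sharp 2}\}$ and $\{1^{\sharp 2},\nu^{\sharp 2},(\nu^{-1})^{\sharp 2}\}$ recorded in the statement. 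Conceptually this last step is a rigidity phenomenon for the triple $(A,B,\rho_{X,V}(g_1))$ with $\rho_{X,V}(g_1)$ regular semisimple (a multiplicative Deligne--Simpson constraint), and it can equally well be checked a posteriori against the explicit representations constructed in the next section.
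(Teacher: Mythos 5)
Your determinant system is strictly weaker than you claim, and the assertion that it ``fixes $C_{\rho}$ to the value listed in each line of the statement'' already fails at $d=n=3$, not merely at $d=4,5,6$. For $d=3$ your full list of constraints is: $\zeta^3=\eta^2=C_{\rho}$, $C_{\rho}^3=\Delta^6$, $\zeta^3\nu^{a_1+2a_2}=\Delta^2$, $(-1)^{b_1}\eta^3=\Delta^3$, with $\Delta=e_3(X)$ and $A$, $B$ non-scalar. Now take $C_{\rho}=\nu\, e_3(X)^2$, the $A$-multiplicities $(a_0,a_1,a_2)=(2,0,1)$ relative to a cube root $\zeta$ of $C_{\rho}$, and $(b_0,b_1)=(2,1)$ with $\eta=-e_3(X)\nu^{2}$. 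Then $\det A=\zeta^3\nu^{a_1+2a_2}=\nu e_3^2\cdot\nu^2=e_3^2=\Delta^2$, $\eta^2=e_3^2\nu^{4}=\nu e_3^2=C_{\rho}$, and $\det B=-\eta^3=e_3^3=\Delta^3$: every condition you impose is satisfied, yet $C_{\rho}\neq e_3(X)^2$ and the $A$-spectrum is unbalanced. Your congruence ``$a_1\equiv a_2\pmod 3$'' was obtained by setting the phase $\nu^{a_1+2a_2}$ equal to $1$, i.e.\ by assuming $\zeta^d=\Delta^2$; for $d=3$ that assumption \emph{is} the claim $C_{\rho}=e_3^2$, so the argument is circular. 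The same under-determination infects $d=4$: with $b_1$ odd your system admits $C_{\rho}=\pm\mathrm{i}\,h^3$ (for instance $B$-pattern $(3,1)$ and $A$-pattern $(2,2,0)$ relative to $\zeta=e^{\mathrm{i}\pi/6}h$, which gives $\det A=\zeta^4\nu^2=h^4=\Delta^2$ and $\det B=-C_{\rho}^2=h^6=\Delta^3$), so even your parenthetical claim that determinants ``still pin down $C_{\rho}$'' in the higher cases is false. The root cause is structural: determinants record only the \emph{product} of eigenvalues and use the braid relation only through $a^3=b^2=c$, and that information can never separate the admissible configurations. Only the case $d=2$ genuinely closes, because there non-scalarity of the $2\times 2$ matrix $B$ forces $(b_0,b_1)=(1,1)$ and hence $C_{\rho}=-\Delta^3$ before any phase ambiguity arises.

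Consequently, in your scheme the entire content for $d\geq 3$ is carried by the continuity-plus-specialization step, and that step is asserted rather than performed. To complete it you would need (i) to handle the monodromy of the cube root $\zeta$ over the configuration space of $X$ (only the unordered multiset of multiplicities is globally constant; the labelled assignments in (\ref{d=n=4})--(\ref{d=6,n=5}), e.g.\ that the doubled $A$-eigenvalue is $h$ itself, must afterwards be recovered from determinant-type constraints), and, far more substantially, (ii) to compute, for \emph{every} $d$-dimensional irreducible representation of the quotient groups of orders $24$, $96$ and $600$, the eigenvalue multiplicities of the images of $a$ and $b$. Item (ii) is exactly the statement of the proposition at the special point; calling it classical does not discharge it, and nothing in your write-up supplies it. The paper avoids specialization entirely: it evaluates $\mathrm{Tr}_V(g_1^kg_2)$, $k=2,\dots,5$, in two ways --- once by cyclicity and the braid relation (e.g.\ $\mathrm{Tr}_V\,g_1^3g_2=\mathrm{Tr}\,A^2$), once by reducing $g_1^k$ with the minimal polynomial $P_X$ --- and then matches coefficients of the algebraically independent $e_1,\dots,e_{n-1}$, obtaining the \emph{linear} spectral data $\mathrm{Tr}\,A$, $\mathrm{Tr}\,A^2$, $\mathrm{Tr}\,B$ (for $d=3$: $\mathrm{Tr}\,A=\mathrm{Tr}\,A^2=0$, $\mathrm{Tr}\,B=-e_3$), which, unlike determinants, single out the multiplicity vectors uniquely and uniformly in $X$. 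As written, your proof is complete only for $d=2$; for all other cases it is either circular or deferred to an uncarried-out computation.
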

\smallskip

\begin{proof} Denote $\mbox{\rm Tr}_V$ an operation of taking trace in representation $\rho_{X,V}$. To prove assertions of the proposition we analyze functions
$\mbox{\rm Tr}_V (g_1^k g_2)$, for $k=2,\dots ,5$.

Case $d=n=2$.~ Using minimal polynomial for $g_1$ and characteristic polynomial for $g_2$ we calculate
$$
\mbox{Tr} B\, =\, \mbox{Tr}_V \left( g_1 g_2 g_1\right)\, =\, \mbox{Tr}_V \left( g_1^2 g_2\right)\, =\, e_1({X}) \left( \mbox{Tr}A - e_2({X})\right).
$$
Noticing that spectral condition (\ref{spec}) for the non-scalar $2\!\times\!2$ matrix $B$ assumes
$\mbox{Tr}B =0$ we conclude that $\mbox{Tr} A = e_2({X})$. From (\ref{det}) we have $C_{\rho}=\pm e_2({\mathrm x})^3$, which together with spectral condition on $A$ (\ref{spec}) leaves us the only possibility to fulfill relations for the traces of $A$ and $B$, namely the one presented in (\ref{d=n=2}).

Case $d=n=3$.~ We shall evaluate $\mbox{Tr}_V g_1^3 g_2$ in two different ways. First, we use cyclic property of the trace and the braid relation (\ref{braid}):
\be
\lb{trA^2}
\mbox{Tr}_V g_1^3 g_2\, =\, \mbox{Tr}_V g_1^2 g_2 g_1\, =\, \mbox{Tr}_V (g_1 g_2)^2\, =\,
\mbox{Tr} A^2.
\ee
Second, we apply minimal polynomial for $g_1$ and characteristic polynomial for $g_2$:
$$
\mbox{Tr}_V g_1^3 g_2\, =\, e_1({X}) \mbox{Tr} B\, -\, e_2({X}) \mbox{Tr} A\, +\,
e_3({X}) e_1({X}).
$$
Comparing the results of these calculations and taking into account that, by (\ref{spec}) and (\ref{det}), traces of powers of $A$ and $B$ can be expressed in terms of (roots of) $e_3({X})$ and, hence, are algebraically independent from $e_{1}({X})$ and $e_{2}({X})$ we find that
$ \mbox{Tr} A = \mbox{Tr} A^2 = 0$, $\mbox{Tr} B= - e_3({X})$.
On the other hand from (\ref{det}) one finds $C_{\rho}= \sqrt[3]{1}\, e_3({X})^2$ which, together with the spectral conditions (\ref{spec}), gives (\ref{d=n=3}) as the only possibility to satisfy the above relations for traces.

Case $d=n=4$.~
Similarly to the case $d=n=3$ we calculate $\mbox{Tr}_V g_1^4 g_2$ in two ways:
\ba
\lb{1^4 2}
&&\mbox{ Tr}_V g_1^4 g_2\, =\, \mbox{ Tr}_V (g_1 g_2)^2 g_1\, =\, C_{\rho} \mbox{ Tr}_V (g_1 g_2)^{-1} g_1\, =\, C_{\rho}\,
{e_3({X})/ e_4({X})},
\\[4pt]
\nn
&&\mbox{ Tr}_V g_1^4 g_2\, =\, e_1({X})\mbox{Tr} A^2\,-\, e_2({X})\mbox{Tr} B\, +\, e_3({X})\mbox{Tr} A
\, -\, e_4({X}) e_1({X}),
\ea
where in the last line we take additionally into account eq.(\ref{trA^2}). Hence, using an algebraic independence of
$C_{\rho}$ and thus of $\mbox{Tr} A$, $\mbox{Tr} A^2$ and $\mbox{Tr} B$ from the elementary symmetric polynomials $e_{i}({X})$, $i=1,2,3,$
one concludes: $\mbox{Tr} A = C_{\rho}/ e_4({X}),\;\; \mbox{Tr} A^2 = e_4({X}), \;\; \mbox{Tr} B=0$.
The latter conditions are only compatible with eqs.(\ref{det}) and (\ref{spec}) in two cases given in (\ref{d=n=4}).

Case $d=n=5$.~ Here we calculate $\mbox{Tr}_V g_1^5 g_2$:
\ba
\nn
\mbox{ Tr}_V g_1^5 g_2 &\!\!=\!\!& C_{\rho}\mbox{ Tr}_V (g_1 g_2)^{-1} g_1^2\, =\, C_{\rho} \mbox{ Tr}_V g_1^{-1} g_2
\\[2pt]
\nn
= &\!\!\!\!& \hspace{-6mm}
{C_{\rho}\over e_5({X})}\left( C_{\rho} {e_4({X})\over e_5({X})}\, -\, e_1({X})\mbox{Tr}A^2\, +\,
e_2({X})\mbox{Tr}B\, -\, e_3({X})\mbox{Tr}A\, +\, e_4({X}) e_1({X})\right),
\ea
where passing to the second line we expressed $g_1^{-1}$ in terms of positive powers of $g_1$ using its minimal polynomial and then used $d=5$ analogue of formula (\ref{1^4 2}).

Calculating $\mbox{Tr}_V g_1^5 g_2$ in another way we obtain
$$
\mbox{ Tr}_V g_1^5 g_2\, =\, e_1({X})\left( C_{\rho} {e_4({X})\over e_5({X})}\right)\, -\, e_2({X})\mbox{Tr} A^2\,+\, e_3({X})\mbox{Tr} B\, -\, e_4({X})\mbox{Tr} A\, +\, e_5({X}) e_1({X}).
$$
Now collecting coefficients in the independent polynomials $e_i({X}))$, $i=1,2,3,4$, and taking into account eq.(\ref{det}) we find $C_{\rho}= e_5({X})^{6/5}$,
$\mbox{Tr} A = - e_5({X})^{2/5}$, $\mbox{Tr} A^2 = - e_5({X})^{4/5}$, $\mbox{Tr} B = e_5({X})^{3/5}$, which in combination with (\ref{spec}) finally leads to conditions (\ref{d=n=5}).

Case $d=6$, $n=5$:~ We calculate $\mbox{Tr}_V g_1^5 g_2$ in two ways similarly to the previous case, but using now different
expressions $\mbox{Tr}_V g_1 =e_1({X})+x_i$, $\mbox{Tr}_V g_1^{-1}= e_4({X})/e_5({X})+x_i^{-1}$, following from the characteristic polynomial (\ref{charpol}). Collecting then coefficients in independent polynomials we derive
$C_{\rho}=-x_i e_5({X})$, $\mbox{Tr} A = \mbox{Tr} A^2 = \mbox{Tr} B = 0$, which in combination with (\ref{spec})
assumes (\ref{d=6,n=5}).
\end{proof}

\section{Low dimensional representations of $Q_X$ and semisimplicity}

In this section we construct explicitly representations of  algebras $Q_X$ whose data coincide with those given in the proposition \ref{prop1}.
Investigating reducibility conditions for these representations we obtain semisimplicity criteria for algebras $Q_X$ and classify their irreducible representations.
We derive formulas for the representations in the basis of eigenvectors of $g_1$.
\begin{prop}
\lb{prop2}
Algebras $Q_X$ in cases $|X|\leq 5$  have following representations of dimensions $\dim V\leq 6 $.
\medskip

\ni
$\;\bullet\;$  \underline{$|X|=\dim V =1:$}
\be
\lb{rep-dim1}
\rho_X^{(1)}(g_1)=\rho_X^{(1)}(g_2)=x_1 .
\ee

\ni
$\;\bullet\;$  \underline{$|X|=\dim V =2:$}
\be
\lb{rep-dim2}
\rho_{X}^{(2)}(g_1)=diag\{x_1, x_2\}, \qquad
\rho_{X}^{(2)}(g_2)=
\frac{1}{x_1-x_2}
\left(\begin{array}{cc}
-x_2^2& -x_1 x_2\\[2pt]
x_1^2-x_1x_2+x_2^2& x_1^2
\end{array}\right)\! .
\ee

\ni
$\;\bullet\;$  \underline{$|X|=\dim V =3:$}
\be
\lb{rep-dim3}
\rho_{X}^{(3)}(g_1)=diag\{x_1, x_2, x_3\}, \;\;
\rho_{X}^{(3)}(g_2)\!=\!
\left(\begin{array}{ccc}
\frac{x_2x_3(x_2+x_3)}{\Delta_1(X)}&\frac{x_3(x_1^2+x_2x_3)}{\Delta_1(X)}&\frac{x_2(x_1^2+x_2x_3)}
{\Delta_1(X)}\\[5pt]
\frac{x_3(x_2^2+x_1x_3)}{\Delta_2(X)}&\frac{x_1x_3(x_1+x_3)}{\Delta_2(X)}&\frac{x_1(x_2^2+x_1x_3)}
{\Delta_2(X)}\\[5pt]
\frac{x_2(x_3^2+x_1x_2)}{\Delta_3(X)}&\frac{x_1(x_3^2+x_1x_2)}{\Delta_3(X)}&\frac{x_1x_2(x_1+x_2)}{\Delta_3(X)}
\end{array}\right)\! ,
\ee\vspace{-6mm}

\ni
where we introduced notation
\be
\lb{Delta}
\Delta_i(X):={\prod_{j=1,\;\; j\neq i}^{|X|}}(x_j-x_i).
\ee

\ni
$\;\bullet\;$  \underline{$|X|=\dim V =4.$}\; There exist two inequivalent representations depending on a choice of the
square root $h=\sqrt{e_4(X)}$:
\ba
\nn
\rho_{h,X}^{(4)}(g_1)& =& \mbox{diag}\{x_1,x_2,x_3,x_4\} ,
\\[5pt]
\lb{rep-dim4}
\rho_{h,X}^{(4)}(g_2)&=&
\left(\begin{array}{cccc}
\frac{\alpha_1}{\Delta_1(X)}&
\frac{\beta_{1}\, \gamma_{3}\, \gamma_{4}}{\Delta_1(X)}&
\frac{\beta_{1}\, \gamma_{2}\, \gamma_{4}}{\Delta_1(X)}&
\frac{\beta_{1}\, \gamma_{2}\, \gamma_{3}}{\Delta_1(X)}
\\[6pt]
\frac{\beta_{2}}{\Delta_2(X)}&
\frac{\alpha_2}{\Delta_2(X)}&
\frac{\beta_{2}\, \gamma_{2}}{\Delta_2(X)}
&\frac{\beta_{2}\, \gamma_{2}}{\Delta_2(X)}
\\[6pt]
\frac{\beta_{3}}{\Delta_3(X)}&
\frac{\beta_{3}\, \gamma_{3}}{\Delta_3(X)}
&\frac{\alpha_3}{\Delta_3(X)}&
\frac{\beta_{3}\, \gamma_{3}}{\Delta_3(X)}
\\[6pt]
\frac{\beta_{4}}{\Delta_4(X)}&
\frac{\beta_{4}\, \gamma_{4}}{\Delta_4(X)}
&\frac{\beta_{4}\, \gamma_{4}}{\Delta_4(X)}
&\frac{\alpha_4}{\Delta_4(X)}
\end{array}\right).
\ea
\ba
\nn
\hspace{-4mm}
\mbox{Here}\;\;\;\alpha_i(h,X) &:=& e_3(X^{\setminus i})\, e_1(X^{\setminus i})\, -\, h\, e_2(X^{\setminus i}), \quad  X^{\setminus i}:= X\setminus \{x_i\},
\\
\lb{abc}
\beta_i(h,X) &:=& {e_4(X)}/{x_i^2}\, -\, h, \quad i=1,2,3,4,
\\
\nn
\gamma_a(h,X) & := & x_1 x_a\, +\, x_b x_c\, -\, h,\quad \mbox{ $a,b,c\in \{x_2,x_3,x_4\}$ are pairwise distinct}.
\ea
\vspace{0mm}

\ni
$\;\bullet\;$  \underline{$|X|=\dim V =5.$}\; There exist five inequivalent representations corresponding to different values of the
 root $f({X}):=\sqrt[5]{e_{5}({X})}$:
\ba
\lb{rep-dim5}
\rho_{f,X}^{(5)}(g_1)& =& \mbox{diag}\{x_1,x_2,x_3,x_4,x_5\},
\quad
\rho_{f,X}^{(5)}(g_2)\, =\, || m_{ij}  ||_{i,j=1}^{\quad\;\, 5},
\\[5pt]
\lb{diag-dim5}
m_{ii}(f,X)& :=& \frac{e_4(X^{\setminus i})\, e_1(X^{\setminus i}) + f\, x_i\, e_3(X^{\setminus i})+
f \prod_{k =1,\, k\neq i}^{\quad 5}(f+x_k)}{\Delta_i(X)},
\\
\lb{offdiag-dim5}
m_{ij}(f,X)& :=& \frac{(x_i^2 + f\, x_i + f^2)\prod_{k=1,\, k\neq i,j}^{\quad 5}(f^2+ x_i x_k)}
{f\, x_i\, x_j\,\Delta_i(X)}, \;\; \forall i\neq j.
\ea
\vspace{0mm}

\ni
$\;\bullet\;$  \underline{$|X|=5,\,\dim V =6.$}\; There exist  five inequivalent representations $\rho^{(6)}_{i,X},\; i=1,\dots ,5,$ corresponding to all admissible values $C_{\rho}=-x_i e_5(X)$ of the central element $c$.
Formulas for $\rho^{(6)}_{5,X}$ are given in table 1. Formulas for the other representations can be obtained by the transposition of the eigenvalues $x_5$ and $x_i$: $\rho^{(6)}_{i,X} =  \sigma_{i 5}\circ \rho^{(6)}_{5,X}$, $i=1\dots 4$.

\begin{table}[t]
\caption{6-dimensional representation of $Q_X$, $|X|=5$.}
\hspace{-6mm}
\mbox{
\begin{tabular}{|c|p{110mm}|}
\hline
\multicolumn{2}{|c|}{\rule{0pt}{20pt}
$\rho^{(6)}_{5,X}(g_1)\, =\, \mbox{diag}\{x_1,x_2,x_3, x_4, x_5, x_5\}\, ,
\qquad
\rho^{(6)}_{5,X}(g_2)\, =\, ||g_{ij}||_{i,j=1}^{\quad\;\,6},
$
}
\\[\medskipamount]
\hline\hline
\;\;$G\!:=\!||g_{ij}||_{i,j=1}^{\quad\;\,4}$\;{\rm :} \rule{0pt}{20pt}
&
\hspace{1pt}
$
g_{ii} = \frac{e_4(X^{ \setminus i}) e_1(X^{\setminus i})-x_i x_5 e_3(X^{\setminus i})}{\Delta_i(X)}, \;
X^{\setminus i}:=X\setminus \{x_i\},\;{\scriptstyle  i=1,\dots, 4};
$
\par
\rule{0pt}{15pt}
$
g_{1a}\, =\, \frac{p_a \,q_b\, q_c}{x_1^2 \Delta_a(X)}, \quad  g_{a 1}\, =\, \frac{p_1}{x_a^2 \Delta_1(X)},\quad
g_{ab}\, =\, \frac{q_{a}\, p_b}{x_a^2 \Delta_b(X)},
$
\hfill
\par
\rule{0pt}{15pt}
{\footnotesize
where indices $a, b, c \in \{2,3,4\} $ are pairwise distinct,  and}\hfill
\par
\rule{0pt}{15pt}
$
q_{a}(X) :=x_1 x_a+x_b x_c\, , \quad p_i(X):=  e_5(X)-x_i^3x_5^2\, ;
$
\hfill
\\[\medskipamount]
\hline
\rule{0pt}{22pt}
$
G_{31}\!:=\!
\left(\!\!\begin{array}{cc}
g_{51}& g_{52}\\
g_{61}& g_{62}
\end{array}\!\!\right)
$\!\!
{\rm :}
 & \par
 \hspace{1pt}
 $\mbox{diag}\{\frac{1}{\Delta_1(X)},\frac{1}{\Delta_2(X)}\}$;
\\[\bigskipamount]
\hline
\rule{0pt}{22pt}
$
G_{32}\!:=\!

\left(\!\!\begin{array}{cc}
g_{53}& g_{54}\\
g_{63}& g_{64}
\end{array}\!\!\right)
$\!\!
{\rm :}
&
$
\left(\!\!\!\begin{array}{cc}
q_4\, r& q_3\, (\sigma_{34}\!\circ\! r)\\
(\sigma_{12}\!\circ\! r) & (\sigma_{12}\sigma_{34}\!\circ\! r)
\end{array}\!\!\!\right)
$,
{\footnotesize where}
$r(X):=\frac{x_3}{x_1(x_2-x_1)\Delta_3(X^{\setminus 2})}$,\hfill
\par
\rule{0pt}{15pt}
{\footnotesize and}
$\forall f(X):\; \sigma_{ij}\!\circ\! f(\dots x_i \dots x_j \dots):=f(\ldots x_j \ldots x_i \ldots)$;\hfill
\\[\medskipamount]
\hline
\rule{0pt}{22pt}
$
G_{33}\!:=\!

\left(\!\!\begin{array}{cc}
g_{55}& g_{56}\\
g_{65}& g_{66}
\end{array}\!\!\right)
$\!\!
{\rm :}
 &
 $
\left(\!\!\!\begin{array}{cc}
u& q_3\, q_4\, v \\
(\sigma_{12}\!\circ\! v) & (\sigma_{12}\!\circ\! u)
\end{array}\!\!\!\right)
$,
{\footnotesize where}
$v(X):=\frac{p_2(X)}{x_1x_5 (x_2-x_1)\Delta_5(X^{\setminus 2})}$,\hfill
\par
\rule{0pt}{15pt}
{\footnotesize and}\;
$u(X):=\frac{x_1 x_2(x_3+x_4)(x_3 x_4-x_1 x_5)\,+\,x_3 x_4 (x_2-x_1)(x_1^2+x_2 x_5)}{(x_2-x_1)\Delta_5(X^{\setminus 2})}$;\hfill
\\[\bigskipamount]
\hline
\rule{0pt}{25pt}
$
G_{23}\!:=\!
\left(\!\!\begin{array}{cc}
g_{35}& g_{36}\\
g_{45}& g_{46}
\end{array}\!\!\right)
$\!\!
{\rm :}
 &
 $
\frac{1}{x_5 \Delta_5(X)}\left(\!\!\!\begin{array}{cc}
\frac{w}{x_3^2 }& \frac{q_3\,  (\sigma_{12}\circ w)}{x_3^2 } \\[4pt]
\frac{(\sigma_{34}\circ w)}{x_4^2 } & \frac{q_4\, (\sigma_{12}\sigma_{34}\circ w)}{x_4^2 }
\end{array}\!\!\!\right)
$,
 \par
 \rule{0pt}{15pt}
$\small w(X):={\scriptstyle  p_1(X)\bigl(x_1 x_2 x_3 x_4\{x_1 x_3+x_5(x_2+x_4)\}\,-\,x_5^3\{x_1 x_3(x_2+x_4)+x_5 x_2 x_4\}\bigr)}$;\hfill
\\[\medskipamount]
\hline
\rule{0pt}{25pt}
$
G_{13}\!:=\!
\left(\!\!\begin{array}{cc}
g_{15}& g_{16}\\
g_{25}& g_{26}
\end{array}\!\!\right)
$\!\!
{\rm :}
 &
 $
\frac{1}{\Delta_5(X)}\left(\!\!\!\begin{array}{cc}
\frac{z}{x_1}& \frac{q_3\, q_4\,  (\sigma_{12}\sigma_{23}\circ w)}{x_1^2 x_5} \\[4pt]
\frac{(\sigma_{23}\circ w)}{x_2^2 x_5} & \frac{(\sigma_{12}\circ z)}{x_2}
\end{array}\!\!\!\right)
$,
\par
\rule{0pt}{15pt}
$z(X):={\scriptstyle (e_1 e_3-x_1^2 e_2)(x_1 e_1 e_3-e_2 x_5^3) x_1 x_5\;+}$
\par
\rule{0pt}{12pt}
$\quad\;{\scriptstyle e_3 (x_1-x_5)\bigl( x_1^2 (e_1-x_1)\{e_3(x_1-x_5)-e_1 x_5^3\}\,+\, (x_1 e_2-e_3)\{x_1 e_2 +(x_1-x_5)x_5^2\}x_5\bigr)} $,
\par
\rule{0pt}{12pt}
{\footnotesize where $e_i$ are elementary symmetric polynomials in variables $x_2, x_3, x_4$}.
\\[\medskipamount]
\hline
\end{tabular}
}
\end{table}
\end{prop}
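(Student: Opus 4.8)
The plan is to reduce the entire proposition to a single relation. In every case the matrix $\rho(g_1)$ is diagonal with entries taken from $X$, so the polynomial relation $P_{X}(\rho(g_1))=0$ of (\ref{poly-rel}) holds automatically. Furthermore, in $B_3$ the second generator is conjugate to the first: using only the braid relation (\ref{braid}) one checks $g_2=a\,g_1\,a^{-1}$ with $a=g_1g_2$ as in (\ref{gen-ab}), since $g_1g_2g_1\,g_2^{-1}g_1^{-1}=g_2g_1g_2\,g_2^{-1}g_1^{-1}=g_2$. Hence, once the explicit matrices satisfy the braid relation and $\rho(g_2)$ is invertible, $\rho(g_2)=\rho(a)\rho(g_1)\rho(a)^{-1}$ is conjugate to $\rho(g_1)$ and is therefore annihilated by $P_X$ as well, so the pair defines a representation of $Q_X$ (\ref{quot-alg}). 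Invertibility of $\rho(g_2)$ comes for free from the value $\det\rho(g_2)=\prod_i x_i^{m_i}\neq 0$, which I will read off directly and which is consistent with (\ref{det}). Thus the whole content is the braid relation plus this determinant check; the cases $|X|=\dim V=1,2$ are immediate, and the substance lies in $n=3,4,5$ and the six-dimensional family.

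Since $\rho(g_1)=\mathrm{diag}\{x_i\}$, the braid relation $\rho(g_1)\rho(g_2)\rho(g_1)=\rho(g_2)\rho(g_1)\rho(g_2)$ becomes, writing $(g_2)_{ij}$ for the entries of $\rho(g_2)$, the quadratic system $x_i x_j\,(g_2)_{ij}=\sum_k (g_2)_{ik}\,x_k\,(g_2)_{kj}$ for all $i,j$. So the task is to verify these as polynomial identities in $x_1,\dots,x_n$ for the entries listed above. The decisive simplification is permutation covariance: in the cases $|X|=\dim V=n$ the formulas are manifestly $S_n$-covariant, i.e.\ the $(i,j)$ entry is obtained from a fixed seed entry by the variable permutation carrying the seed indices to $(i,j)$, with both the $\Delta_i$ of (\ref{Delta}) and the $e_k(X)$ transforming accordingly. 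Because $S_n$ acts transitively on the diagonal positions and on ordered pairs of distinct indices, it suffices to verify the braid identity for one diagonal entry, say $(1,1)$, and one off-diagonal entry, say $(1,2)$ --- just two polynomial identities per dimension. Each is then confirmed by clearing the Vandermonde denominators $\Delta_i$ and reducing with the standard relations among elementary symmetric polynomials.

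For the exceptional family $|X|=5$, $\dim V=6$ the residual symmetry is only the subgroup fixing the distinguished eigenvalue $x_5$ (this is precisely the role of the transpositions $\sigma_{ij}$ appearing in the block description of Table 1). Consequently far fewer of the entrywise identities follow from one another, and the block decomposition $G,\,G_{13},\,G_{23},\,G_{31},\,G_{32},\,G_{33}$ must be substituted into $x_ix_j\,(g_2)_{ij}=\sum_k (g_2)_{ik}\,x_k\,(g_2)_{kj}$ and checked block by block. This is the main obstacle: the surviving independent entries --- the polynomials $w$, $z$, $u$, $v$, $r$ together with the factors $p_i$ and $q_a$ --- are of high degree, and a genuine though finite computation is needed to watch the quadratic combinations collapse onto the linear left-hand sides. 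I expect the difficulty here to be one of bookkeeping rather than of principle.

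Finally, I will match the constructed representations with Proposition \ref{prop1}. Evaluating $C_{\rho}$ from $\rho(c)=\rho(a)^3=\rho(b)^2$ together with the traces $\mathrm{Tr}\,A$, $\mathrm{Tr}\,A^2$ and $\mathrm{Tr}\,B$ reproduces exactly the values recorded in (\ref{d=n=2})--(\ref{d=6,n=5}); by Proposition \ref{prop1} this confirms that the above list exhausts the irreducible data available in dimensions $\dim V\le 6$. Since $\rho(g_1)$ is diagonal with the stated spectrum, any invariant subspace is a coordinate subspace spanned by eigenvectors of $g_1$, so that the generic non-vanishing of the off-diagonal entries of $\rho(g_2)$ leaves no proper invariant coordinate subspace; the precise degeneration loci where such a subspace appears are exactly the reducibility conditions analysed in the remainder of this section.
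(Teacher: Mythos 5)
Your route is genuinely different from the paper's. The paper does not verify the stated matrices post hoc; it \emph{derives} them from the spectral data of Proposition \ref{prop1}, writing $A=U^{-1}D_aU$, $B=VD_bV^{-1}$ and solving the intertwining equation $U D_g V=D_a^{-1}UVD_b$ of (\ref{UV-cond}) forced by $g_1=a^{-1}b$, after which $\rho(g_2)=D_g^{-1}A$ satisfies all defining relations essentially by construction. Your reduction is instead pure verification: check the braid relation (\ref{braid}) entrywise, check $\det\rho(g_2)\neq 0$, and deduce $P_X(\rho(g_2))=0$ from the conjugation $g_2=ag_1a^{-1}$, $a=g_1g_2$. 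That reduction is correct, and it can even be streamlined: taking determinants in the braid relation gives $\det\rho(g_2)\bigl(\det\rho(g_2)-\det\rho(g_1)\bigr)=0$ identically in the $x_i$, so as a rational function $\det\rho(g_2)$ is either identically $0$ or identically equal to $\det\rho(g_1)\neq 0$, and a single numerical specialization decides --- no need to ``read off'' determinants of these matrices.

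Two genuine gaps remain, however. (i) Your labour-saving device --- ``manifest $S_n$-covariance,'' reducing each case to one diagonal and one off-diagonal identity --- holds for $n=3$ and $n=5$ but is \emph{false} for $n=4$: the normalization chosen in (\ref{rep-dim4}) distinguishes the index $1$. Indeed, $\sigma_{12}$ fixes $h$ and $\gamma_2$, swaps $\gamma_3\leftrightarrow\gamma_4$, and sends $\beta_1\mapsto\beta_2$, $\Delta_1\mapsto\Delta_2$, so it maps the $(1,2)$ entry $\beta_1\gamma_3\gamma_4/\Delta_1$ to $\beta_2\gamma_3\gamma_4/\Delta_2$, which is not the $(2,1)$ entry $\beta_2/\Delta_2$. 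The matrix is covariant only under permutations of $\{x_2,x_3,x_4\}$, whose orbits on index pairs force five seed identities (types $(1,1)$, $(a,a)$, $(1,a)$, $(a,1)$, $(a,b)$), not two. In Table 1 the situation is worse than you state: the $\sigma_{ij}$ there are a notational device generating entries from seed functions, not a covariance of the whole matrix (rows $5,6$ treat columns $1,2$ and $3,4$ asymmetrically --- compare the diagonal block $G_{31}$ with the full block $G_{32}$), so even the stabilizer of $x_5$ does not act. (ii) More fundamentally, you never perform the computations on which everything rests: for $d\leq 5$ the identities are asserted to be ``confirmed by clearing denominators,'' and for $d=6$ you ``expect'' the bookkeeping to collapse. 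Since your whole argument is verification, the unperformed verification \emph{is} the proof; what you have is a sound and well-organized checking scheme, not yet a proof. (The paper is also terse about $d=5,6$, but its formulas arise as solutions of (\ref{UV-cond}), which is where its argument actually lives; your approach has no substitute for that step until the polynomial identities are actually checked.) A minor overclaim in your last paragraph: matching traces against Proposition \ref{prop1} does not show the list exhausts all irreducibles of dimension $\leq 6$; fortunately Proposition \ref{prop2} makes no such claim, and completeness is handled later via the Artin--Wedderburn dimension count.
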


\ni {\bf Remark 1.}
{\small\rm
As it is noticed in section 1  a representation of $Q_X$ stays also a representation of $Q_{X'}$ if $X\subset X'$.
}
\vspace{2mm}

\ni {\bf Remark 2.}
{\small\rm
Irreducible representations of $B_3$ of dimensions $d \leq 5$ were classified by Imre Tuba and Hans Wenzl  in  \cite{TW}.
We reproduce their table of representations  in the basis where $g_1$ takes a diagonal form.
In their approach I.Tuba and H.Wenzl have used different basis in which matrices of the braids $g_1$ and $g_2$ assume a special `ordered' triangular  from. This allows them analyzing also algebras whose minimal polynomials $P_X$ have multiple roots and, hence, matrices of the braids $g_{1,2}$ are not diagonalizable. These cases are missed in our approach. Instead, our method is suitable for construction
of the 6-dimensional representations for algebras $Q_X$, $|X|=5$ and, thus, allows us classifying irreducible representations for these algebras and studying their semisimplicity.

Note also that formulas for representations of dimensions $d \leq 5$ were reconstructed in \cite{Ch1} using different methods
with the help of the CHEVIE package of GAP3 (see \cite{MM,Mich}).
}

\begin{proof}
By our initial assumptions  matrices of braids $g_{1,2}$ in any representation are diagonalizable.
We choose a basis where $\rho_{X,V}(g_1) := D_g$ is diagonal. By (\ref{charpol}) the diagonal components of $D_g$ are $x_i$ taken with multiplicities $m_i$.

Keeping in mind that in an irreducible representation matrices
$A$ and $B$ of braids $a$ and $b$ are also diagonalizable (see eq.(\ref{AB})) we use  for them parameterization
\be
\lb{AB-diag}
A = U^{-1} D_a U, \quad B= V D_b V^{-1}.
\ee
Here $D_a$ and $D_b$ are diagonal matrices whose diagonal components are elements of $Spec\,A$ and $Spec\,B$.
For irreducible representations of dimensions $\leq 6$ they were defined in proposition \ref{prop1}.
Due to relation  $g_1=a^{-1}b$ matrices $U$ and $V$ have to satisfy condition
\be
\lb{UV-cond}
U\, D_g\, V\, =\, D_a^{-1} \, U\, V\, D_b.
\ee
We solve this matrix equality for $U$ and $V$ in cases where diagonal matrices $D_g$, $D_a$ and $D_b$ are as described in proposition \ref{prop1}. Formulae for representations given in proposition \ref{prop2} follow then, e.g., from relation $g_2=g_1^{-1} a$:
$
\;\rho_{X,V}(g_2)= D_g^{-1} A.
$\smallskip

Solving (\ref{UV-cond}) is straightforward but rather tedious computation.
For an interested reader we give few details of it in
cases $d=2,3,4$.\medskip

Case $d=2$. We choose
$$
D_g=diag\{x_1,x_2\},\quad D_a=-e_2(X)\, diag\{\nu,\nu^{-1}\}, \quad D_b= {\rm i} {e_2(X)}^{3\over 2}\,diag\{1,-1\}.
$$
Noticing that matrices $U$/$V$ are defined up to left/right multiplication by a diagonal matrix we use for them
following ansatzes
$$
U=\left(\!\!\begin{array}{cc}
1& *\\
*& 1
\end{array}\!\!\right) , \quad
V=\left(\!\!\begin{array}{cc}
1& *\\
*& 1
\end{array}\!\!\right) ,
$$
where stars stay for unknown components. With this settings eq.(\ref{UV-cond}) defines $U$ and $V$ up to conjugation by a diagonal matrix.
We choose a solution which gives nice expression (\ref{rep-dim2}) for $\rho_{X}^{(2)}(g_2)$:
$$
U=\left(\!\!\begin{array}{cc}
1& -\frac{x_1}{ \nu^{-1}x_1 + \nu x_2}\\[2pt]
-\frac{\nu x_1 +\nu^{-1} x_2}{x_1}& 1
\end{array}\!\!\right) , \quad
V=\left(\!\!\begin{array}{cc}
1& -\frac{{\rm i}\sqrt{e_2}}{x_1-x_2+{\rm i}\sqrt{e_2}}\\[2pt]
\frac{x_1-x_2-{\rm i}\sqrt{e_2}}{{\rm i}\sqrt{e_2}}& 1
\end{array}\!\!\right) ,
$$
Note that, unlike $U$ and $V$, resulting expression for $\rho_X^{(2)}(g_2)$ is defined with the only restriction  $x_1\neq x_2$
and does not depend on a choice of root $\sqrt{e_2}$.
\medskip

Case $d=3$. We choose
$$
D_g=diag\{x_1,x_2,x_3\},\;\; D_a={e_3(X)}^{2\over 3}\, diag\{1,\nu^{-1},\nu\}, \;\; D_b= e_3(X)\,diag\{1,-1,-1\},
$$
and use ansatzes
$$
U=\left(\!\!\begin{array}{ccc}
1& *&* \\
*& 1& *\\
*& *& 1
\end{array}\!\!\right) , \quad
V=\left(\!\!\begin{array}{ccc}
1& *&* \\
*& 1& 0\\
*& 0& 1
\end{array}\!\!\right) .
$$
Solution of eq.(\ref{UV-cond}) which gives formula (\ref{rep-dim3}) for $\rho_X^{(3)}(g_2)$ reads
$$
U=\left(\!\!\begin{array}{ccc}
1& {x_1+h\over x_2+h}&{x_1+h\over x_3+h} \\[3pt]
{x_2+\nu h\over x_1 + \nu h}& 1& {x_2+\nu h\over x_3+\nu h}\\[3pt]
{x_3+\nu^{-1} h\over x_1+\nu^{-1} h}& {x_3+\nu^{-1} h\over x_2+\nu^{-1} h}& 1
\end{array}\!\!\right) , \quad
V=\left(\!\!\begin{array}{ccc}
1& -1&-1 \\[2pt]
-{(x_1-x_3)(x_2^2+x_1 x_3)\over (x_2-x_3)(x_1^2+x_2 x_3)}& 1& 0\\[5pt]
-{(x_1-x_2)(x_3^2+x_1 x_2)\over (x_3-x_2)(x_1^2+x_2 x_3)}& 0& 1
\end{array}\!\!\right) .
$$
\medskip

Case $d=4$. We choose $\;\;D_g=diag\{x_1,x_2,x_3,x_4\},$
$$
D_a=h(X)\, diag\{1,1,\nu,\nu^{-1}\}, \quad D_b= {h(X)}^{3\over 2}\,diag\{1,1,-1,-1\},
$$
and  ansatzes for $U$, $V$:
$$
U=\left(\!\!\begin{array}{cc}
I& \Psi^+\\
\Psi^-& \Phi
\end{array}\!\!\right) , \quad
V=\left(\!\!\begin{array}{cc}
I& \Lambda^+\\
\Lambda^-& I
\end{array}\!\!\right) ,
$$
where $I$ is $2\times 2$ unit matrix, $\Phi^\pm$ and $\Lambda^\pm$ are arbitrary $2\times 2$ matrices, and $2\times 2$ matrix $\Phi$ has unit diagonal components. Particular solution of eq.(\ref{UV-cond}) which gives expression (\ref{rep-dim4}) for $\rho_{h,X}^{(4)}(g_2)$ reads
\ba
\nn
\Psi^+ = \left(\!\!\begin{array}{cc}
\frac{x_1(x_3-x_2)\beta_1\gamma_4}{x_3(x_1-x_2)\beta_3}&
\frac{x_1(x_4-x_2)\beta_1\gamma_3}{x_4(x_1-x_2)\beta_4}
\\[4pt]
\frac{x_2(x_3-x_1)\beta_2}{x_3(x_2-x_1)\beta_3}&
\frac{x_2(x_4-x_1)\beta_2}{x_4(x_2-x_1)\beta_4}
\end{array}\!\!\right) ,
& &
\Psi^- = \left(\!\!\begin{array}{cc}
\frac{x_1 x_2}{(x_1 x_2 +\nu^{-1} h)(x_2 x_3 +\nu h)}&
\frac{x_2 x_4 + \nu h}{x_3 x_4 +\nu h}
\\[4pt]
\frac{x_1 x_2}{(x_1 x_2 +\nu h)(x_2 x_4 +\nu^{-1} h)}&
\frac{x_2 x_3 + \nu^{-1} h}{x_3 x_4 +\nu^{-1} h}
\end{array}\!\!\right) ,
\ea
\ba
\nn
\Phi &=& \left(\!\!\begin{array}{cc}
1&
\frac{x_2 x_4+ \nu h}{x_2 x_3+ \nu h}
\\[4pt]
\frac{x_2 x_3+ \nu^{-1} h}{x_2 x_4+ \nu^{-1} h}&
1
\end{array}\!\!\right) ,
\\[4pt]
\nn
\Lambda^+ &=& -\left(\!\!\begin{array}{cc}
\frac{x_3(x_3-x_2)(x_1-\sqrt{h})\gamma_4}{x_1(x_1-x_2)(x_3-\sqrt{h})}&
\frac{x_4(x_4-x_2)(x_1-\sqrt{h})\gamma_3}{x_1(x_1-x_2)(x_4-\sqrt{h})}
\\[4pt]
\frac{x_3(x_3-x_1)(x_2-\sqrt{h})}{x_2(x_2-x_1)(x_3-\sqrt{h})}&
\frac{x_4(x_4-x_1)(x_2-\sqrt{h})}{x_2(x_2-x_1)(x_4-\sqrt{h})}
\end{array}\!\!\right) ,
\\[4pt]
\nn
\Lambda^- &=& -\frac{1}{\gamma_2}\left(\!\!\begin{array}{cc}
\frac{x_1(x_4-x_1)(x_3+\sqrt{h})}{x_3(x_4-x_3)(x_1+\sqrt{h})}&
\frac{x_2(x_4-x_2)(x_3+\sqrt{h})\gamma_3}{x_3(x_4-x_3)(x_2+\sqrt{h})}
\\[4pt]
\frac{x_1(x_3-x_1)(x_4+\sqrt{h})}{x_4(x_3-x_4)(x_1+\sqrt{h})}&
\frac{x_2(x_3-x_2)(x_4+\sqrt{h})\gamma_4}{x_4(x_3-x_4)(x_2+\sqrt{h})}
\end{array}\!\!\right) .
\ea
To get it we exclude consecutively matrices $\Lambda^\pm$, $\Psi^-$, $\Phi$ from equations (\ref{UV-cond}) expressing them finally in terms of $\Psi^+$. The only condition imposed by eq.(\ref{UV-cond}) on the components of $\Psi^+$ is
$$
\frac{(\Psi^+)_{11}(\Psi^+)_{22}}{(\Psi^+)_{12}(\Psi^+)_{21}} = \frac{(x_3-x_2)(x_4-x_1)\gamma_4}{(x_4-x_2)(x_3-x_1)\gamma_3}.
$$
Remaining three degrees of freedom are due to arbitrariness in conjugation of $U$ and $V$ by a diagonal matrix. We fix it to get the expression for $\rho_X^{(4)}(g_2)$ in the most suitable form.

Solving eq.(\ref{UV-cond}) in cases $d=5$, $\dim V=5,6,$ is more lengthy. We skip it presenting final results of the calculations in eqs.(\ref{rep-dim5})-(\ref{offdiag-dim5}) and  in table 1. For them the braid relation (\ref{braid}) can be checked directly.
\end{proof}

\begin{prop}
\lb{prop3}
For algebras $Q_X$ (\ref{quot-alg}) defined by a set of data $X$ (\ref{X}) representations $\rho^{(d)}_{\dots}$, $d\leq 5$, described in proposition \ref{prop2}
are irreducible if and only if following conditions on their parameters are satisfied
\ba
\lb{irr-dim2}
|X|=2,\;\;\rho^{(2)}_X: &&
I^{(2)}_{ij}:= x_i^2-x_ix_j+x_j^2 \neq 0 ,
\\
\nn
&&\mbox{\small where indices $\scriptstyle i, j\in \{1,2\}$ are distinct;}
\\[4pt]
\lb{irr-dim3}
|X|=3,\;\; \rho^{(3)}_X: &&
I^{(3)}_{ijk}:= x_i^2 + x_jx_k  \neq 0,
\\
\nn
&&
\mbox{\small where  $\scriptstyle i,j,k\in \{1,2,3\}$ are pairwise distinct;}
\\[4pt]
\lb{irr-dim4}
|X|=4,\; \rho^{(4)}_{h,X}: &&
I^{(4)}_{h,i}:=   x_i^2- h \neq 0, \quad J^{(4)}_{h,ijkl}:= x_i x_j +x_k x_l -h \neq 0,
\\
\nn
&&
\mbox{\small where $\scriptstyle i,j,k,l\in \{1,2,3,4\}$ are pairwise distinct;}
\\[4pt]
\lb{irr-dim5}
|X|=5,\; \rho^{(5)}_{f,X}: &&
I^{(5)}_{f,i}:= x_i^2+ x_i f +f^2\neq 0, \quad
J^{(5)}_{f,ij}:=x_i x_j+f^2 \neq  0,
\\
\nn
&&
\mbox{\small where $\scriptstyle i,j\in \{1,2,3,4,5\}$ are pairwise distinct;}
\ea
Otherwise, they are reducible but indecomposable.

For representations $\rho^{(6)}_{s,X}$, $s=1,\dots ,5$, also given in proposition \ref{prop2} we present
less detailed statement, which describes conditions under which all of them are irreducible:
\ba
\nn
|X|=5,\;
\rho^{(6)}_{s,X}, {\scriptstyle 1\leq s\leq 5}:&&
I^{(6)}_{i}:= e_5(X)+x_i^5\neq 0, \quad J^{(6)}_{ij}:= e_5(X)-x_i^3 x_j^2\neq 0,
\\
\lb{irr-dim6}
&&
 K^{(6)}_{i,jklm}:= x_j x_k + x_l x_m \neq 0,
\\
\nn
 &&
\mbox{\small where $\scriptstyle i,j,k,l,m\in \{1,2,3,4,5\}$ are pairwise distinct.}
\ea
Otherwise, among them there are reducible but indecomposable representations.
\end{prop}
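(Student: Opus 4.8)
The plan is to establish irreducibility by identifying the precise loci in parameter space where each representation $\rho^{(d)}_{\dots}$ degenerates, and to show these loci are exactly the vanishing sets of the claimed invariants $I, J, K$. Since the elementary braids are assumed diagonalizable with distinct eigenvalues $x_i$ (the representation is built in the eigenbasis of $g_1$), reducibility of $\rho^{(d)}_{\dots}$ is governed by the existence of a proper invariant subspace common to $\rho_{X,V}(g_1)=D_g$ and $\rho_{X,V}(g_2)=D_g^{-1}A$. First I would exploit the fact that $D_g$ is diagonal with distinct entries: any $g_1$-invariant subspace is spanned by a subset of the standard eigenvectors $\{e_i\}$. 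Hence the representation is reducible precisely when there is a proper, nonempty index set $S\subsetneq\{1,\dots,n\}$ whose span $\langle e_i : i\in S\rangle$ is also stable under $\rho_{X,V}(g_2)$. This reduces irreducibility to a combinatorial condition on the off-diagonal entries of the explicit matrix $\rho_{X,V}(g_2)$ written down in Proposition \ref{prop2}: the span of $\{e_i\}_{i\in S}$ is $g_2$-invariant iff $(\rho_{X,V}(g_2))_{ji}=0$ for all $i\in S$, $j\notin S$.

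The key computational step is then to factor the relevant off-diagonal matrix entries. Inspecting the formulas (\ref{rep-dim2})--(\ref{offdiag-dim5}), each off-diagonal entry is a product of a nonvanishing denominator factor $\Delta_i(X)$ (nonzero because the $x_i$ are distinct) with numerator factors that are exactly the candidate invariants. For instance, in the $d=2$ case the single off-diagonal block of (\ref{rep-dim2}) carries the factor $x_1^2-x_1x_2+x_2^2 = I^{(2)}_{12}$; in the $d=3$ case the numerators $x_i^2+x_jx_k$ are precisely $I^{(3)}_{ijk}$; in $d=4$ the factors $\beta_i = e_4(X)/x_i^2-h$ and $\gamma_a = x_1x_a+x_bx_c-h$ correspond to $I^{(4)}_{h,i}$ and $J^{(4)}_{h,ijkl}$ (after clearing $x_i^2$); and for $d=5$ the numerator of (\ref{offdiag-dim5}) splits as $(x_i^2+fx_i+f^2)\prod_k(f^2+x_ix_k)$, matching $I^{(5)}_{f,i}$ and $J^{(5)}_{f,ij}$. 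So I would prove the forward direction by showing that when all listed invariants are nonzero every off-diagonal entry is nonzero, whence no proper coordinate subspace can be invariant and the representation is irreducible. For the converse I would show that the vanishing of a given invariant forces a specific block of off-diagonal entries to vanish, exhibiting an explicit invariant subspace, and then verify that the resulting subrepresentation does not split off as a direct summand (reducible but indecomposable), e.g.\ by checking that the complementary block of $\rho_{X,V}(g_2)$ does not simultaneously vanish.

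The hard part will be the $6$-dimensional case $\rho^{(6)}_{s,X}$, where $g_1$ has the repeated eigenvalue $x_5$ (with $m_5=2$) and $g_1$-invariant subspaces are no longer merely coordinate subspaces: inside the two-dimensional $x_5$-eigenspace one may take any line. Consequently the clean ``vanishing off-diagonal entry'' criterion must be upgraded to account for these extra directions, and the block structure of the matrix in Table 1 (the $2\times2$ blocks $G_{ij}$) must be analyzed as genuine block-operators rather than scalar entries. Here I expect the analysis to require showing that a common invariant subspace would force simultaneous rank-degeneracy conditions on several blocks, and that these collapse precisely to the vanishing of one of $I^{(6)}_i$, $J^{(6)}_{ij}$, or $K^{(6)}_{i,jklm}$. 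This is also why the statement for $d=6$ is deliberately weaker — it asserts only the ``all irreducible'' direction together with the existence of reducible-indecomposable representations off this locus, rather than a sharp per-invariant reducibility description.

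I would therefore organize the proof as six cases, treating $d=2,3,4,5$ uniformly via the coordinate-subspace criterion and factorization of off-diagonal entries, and handling $d=6$ separately using block-matrix rank arguments; in each case the final indecomposability claim follows by exhibiting the nontrivial extension explicitly from the surviving nonzero entries.
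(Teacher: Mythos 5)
Your proposal is correct and takes essentially the same route as the paper's proof: for $d\le 5$ the paper likewise uses that any invariant subspace must be a coordinate subspace (since $\rho(g_1)$ is diagonal with simple spectrum), reads off reducibility from the factorization of the off-diagonal entries of $\rho(g_2)$ into the invariants $I$, $J$, $K$, and obtains indecomposability by showing that complementary vanishing conditions are incompatible with the distinctness of the $x_i$. For $d=6$ the paper implements exactly the upgrade you anticipate: it intersects a putative invariant subspace with the multiplicity-free part $V_{\{1,2,3,4\}}$ to reduce back to coordinate subspaces, and handles the residual case of a line inside the $x_5$-eigenspace via determinant (rank) conditions on the $2\times 2$ blocks $G_{13}$, $G_{23}$, $G_{33}$.
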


\begin{proof}
We will search for invariant subspaces in representations $\rho^{(d)}_{\dots}$ of proposition \ref{prop2}.
Note that for any $y\in Q_X$ such that $Spec\,\rho_{X,V}(y)$ is multiplicity free
an invariant subspace in $V$ should be a linear span of some subset of a basis of eigenvectors of $\rho_{X,V}(y)$.

Consider representations $\rho^{(d)}_{\dots}$ of dimension $d=\dim V \leq 5$. Here the spectrum of $\rho^{(d)}_{\dots}(g_1)$ is simple.
Choose a basis of eigenvectors of $\rho^{(d)}_{\dots}(g_1)$: $\{v_k:=\delta_{ki},\, 1\leq  i\leq d\}_{k=1,\dots d}$. Denote
\be
\lb{subspace}
V_{Y}:= Span\{v_k: k\in Y\}, \; \mbox{where $Y\subset \{1,2,3,4,5\}$}.
\ee
Obviously, any invariant subspace in the representation space $V$, if exists, should be of the form $V_Y$.
Furthermore, if the representation is decomposable then the decomposition is
\be
\lb{decomposition}
V=V_Y\oplus V_{\bar Y},\; \mbox{where ${\bar Y}:= \{1,2,3,4,5\}\setminus Y$}.
\ee
Correspondingly,  matrix $\rho^{(d)}_{\dots}(g_2)$ have to be block-triangular (resp., block-diagonal) with blocks labelled by indices
from subsets $Y$ and $\bar Y$, iff the representation is reducible (resp., decomposable).
Let us analyze the block structure of $\rho^{(d)}_{\dots}(g_2)$ in cases $d=3,4,5$ (case $d=2$ is trivial).

Case $d=3$. Representation $\rho^{(3)}_X$ (\ref{rep-dim3}) has 2-dimensional invariant subspace $V_{\{1,2\}}$ iff $I^{(3)}_{312}=0$.
Its complementary 1-dimensional subspace $V_{\{3\}}$ exists under conditions $I^{(3)}_{123}=I^{(3)}_{231}=0$. Altogether conditions
$I^{(3)}_{312}=I^{(3)}_{123}=I^{(3)}_{231}=0$ lead to $x_1=x_2=x_3=0$ and, hence, they are incompatible. Invariance conditions in two other cases --- $V_{\{2,3\}}$, $V_{\{1\}}$, and $V_{\{1,3\}}$, $V_{\{2\}}$ ---
differ from the above by a cyclic permutation of the subscript indices.
It follows that $\rho^{(3)}_X$ is irreducible iff inequalities (\ref{irr-dim3}) are fulfilled, and
otherwise it is indecomposable.
\medskip

Case $d=4$. Conditions for existence of invariant subspaces in $\rho^{(4)}_{h,X}$ are
\ba
\lb{123-4}
V_{\{1,2,3\}} :\; I^{(4)}_{h,4} =0;&&V_{\{4\}}:\; I^{(4)}_{h,3} =J^{(4)}_{h,1234}=0,\; \mbox{or}\;\; I^{(4)}_{h,2} =J^{(4)}_{h,1324}=0;
\\[2pt]
\lb{12-34}
V_{\{1,2\}}:\;  I^{(4)}_{h,3} =I^{(4)}_{h,4} =0; && V_{\{3,4\}}:\; J^{(4)}_{h,1234}=0,\; \mbox{or}\;\; I^{(4)}_{h,1} =I^{(4)}_{h,2} =0.
\ea
For the rest of invariant subspaces their existence conditions can be obtained by a cyclic permutations of subscripts $1,2,3,4$ in (\ref{123-4})
\footnote{The only exception is subspace $V_{\{1\}}$ which can not be invariant in this representation.}, or of subscripts $2,3,4$ in (\ref{12-34}). Altogether these conditions justify irreducibility criterium (\ref{irr-dim4}). Decomposability, e.g., like
$V=V_{\{1,2,3\}}\oplus V_{\{4\}}$, or like $V=V_{\{1,2\}}\oplus V_{\{3,4\}}$, demands
$$
 I^{(4)}_{h,1} =I^{(4)}_{h,2}=I^{(4)}_{h,3} =I^{(4)}_{h,4} =0,
\;\quad \mbox{or}\;\quad
I^{(4)}_{h,3} = I^{(4)}_{h,4} = J^{(4)}_{h,1234}=0,$$
or similar sets of relations with permuted subscripts $2,3,4$. One can check that these conditions are incompatible with initial settings for $X$ (\ref{X}).\medskip

Case $d=5$. Invariant subspaces in $\rho^{(5)}_{f,X}$ exist under conditions:
\ba
\lb{1234-5}
V_{\{1,2,3,4\}}\! :\, I^{(5)}_{f,5} =0;&&V_{\{5\}}\!:\, J^{(5)}_{f,12} =J^{(5)}_{f,34}=0,\, \mbox{\small or $\forall$ permutation  of sbs $\scriptstyle 2,3,4$, or}
\\
\nn
&&\quad J^{(5)}_{f,12} =I^{(5)}_{f,3}
=I^{(5)}_{f,4}=0,\, \mbox{\small or $\forall$ permutation of subscripts $\scriptstyle 1,2,3,4$};
\ea
\ba
\lb{123-45}
V_{\{1,2,3\}}\!:\,  J^{(5)}_{f,45} =0,\; \mbox{or}\;\; I^{(5)}_{f,4} =I^{(5)}_{f,5} =0;
&&
V_{\{4,5\}}\!:\, I^{(5)}_{f,3}=J^{(5)}_{f,12} =0,\hspace{30mm}
\\
\nn
&&
\quad \mbox{\small or $\forall$ permutation of subscripts $\scriptstyle 1,2,3$}.
\ea
For the rest of invariant subspaces the existence conditions can be obtained by permutation of indices in formulas above. Taken together these conditions prove irreducibility criterium (\ref{irr-dim5}). On the other hand, an attempt to find decomposition into invariant subspaces,
like $V=V_{\{1,2,3,4\}}\oplus V_{\{5\}}$, or like $V=V_{\{1,2,3\}}\oplus V_{\{4,5\}}$, results in a set of conditions
$$
 I^{(5)}_{f,1}\!=\!J^{(5)}_{f,23}\!=\!J^{(5)}_{f,45}\!=\!0,\,\mbox{\small or}\;
 I^{(5)}_{f,1}\!=\!I^{(5)}_{f,2}\!=\!I^{(5)}_{f,3}\!=\!J^{(5)}_{f,45}\!=\!0,\;
 \mbox{\small or $\forall$ permutation of sbs $1,2,3,4,5$},
$$
which are incompatible with (\ref{X}). Thus, representations $\rho^{(5)}_{f,X}$ are always indecomposable.
\medskip

Case $d=6$ is more sophisticated.  We carry out considerations for representation $\rho^{(6)}_{5,X}$
(see table 1).
For the other 6-dimensional representations results follow then by transpositions of arguments $x_i$.

Take a basis of eigenvectors of $\rho^{(6)}_{5,X}(g_1)$:
 $\{v_k:=\delta_{ki},\, 1\leq  i\leq 6\}_{k=1,\dots 6}$. Assume there exists an invariant subspace
 $V_{inv}\subsetneq V$ and consider its subspace
 $$
 W:=V_{inv}\cup V_{\{1,2,3 ,4\}}.
 $$
Spectrum of $\rho^{(6)}_{5,X}(g_1)$ in this subspace is simple and so, $W$ has a form $W=V_Y$  (\ref{subspace}) for some subset $Y\subset\{1,2,3,4\}$. We consider separately cases with different $Y$.
\smallskip

Case $W=V_{\{1,2,3,4\}}$. Consider action of matrix $\rho^{(6)}_{5,X}(g_2)$ on $W$.
Since components $g_{51}$ and $g_{62}$ of this matrix are always nonzero we conclude that
vectors $v_5$ and $v_6$ belong to $V_{inv}$ and hence, $V_{inv}=V$, which is a contradiction.
\smallskip

Case $W=V_{\{1\}}$. Considering action of $\rho^{(6)}_{5,X}(g_2)$ on $v_1\in W\subset V_{inv}$
we  obtain $v_5\in V_{inv}$. Now let's assume that $V_{inv}=V_{\{1,5\}}$. Then the matrix $\rho^{(6)}_{5,X}(g_2)$ should take block-diagonal form with vanishing components $g_{21}=g_{31}=g_{41}=g_{61}=g_{25}=g_{35}=g_{45}=g_{65}=0$. This happens iff $p_1(X)\equiv J^{(6)}_{15}=0$. Thus, we conclude that
representation $\rho^{(6)}_{5,X}$ under condition $J^{(6)}_{15}=0$ has the invariant subspace $V_{\{1,5\}}$. This subspace is not further reducible.
\smallskip

Case $W=V_{\{2,3\}}$. From the action of $\rho^{(6)}_{5,X}(g_2)$ on $v_2\in V_{inv}$ we get $v_6\in V_{inv}$, as $g_{26}\neq 0$. Assuming then $V_{inv}=V_{\{2,3,6\}}$ and checking block-triangularity of
$\rho^{(6)}_{5,X}(g_2)$: $g_{12}=g_{13}=g_{16}=g_{42}=g_{43}=g_{46}=g_{52}=g_{53}=g_{56}=0$,
we find that this case is realized under condition $q_4(X)\equiv K^{(6)}_{5,1423}=0$. Thus, $V_{\{2,3,6\}}$ is a minimal invariant subspace containing $W=V_{\{2,3\}}$.

Two cases considered  above illustrate appearance of conditions like $J^{(6)}_{\dots}\neq 0$ and
$K^{(6)}_{\dots}\neq 0$ in formulation of the proposition.  Permuting arguments $x_i$, that is, considering all
representations $\rho^{(6)}_{\dots}$ one can obtain all polynomials $J^{(6)}_{\dots}$, $K^{(6)}_{\dots}$
in the conditions of their reducibility.
Consideration of the other cases with $W\neq \emptyset$ is similar.
It does not result in any other independent reducibility conditions. In particular, for representation
$\rho^{(6)}_{5,X}$ one obtains:
\begin{itemize}
\item[-] in case $W=V_{\{2,3,4\}}$ minimal possible invariant  subspace $V_{inv}=V_{\{2,3,4,5,6\}}$;
\item[-] in case  $W=V_{\{1,4\}}$ minimal possible invariant subspace $V_{inv}=V_{\{1,4,5,6\}}$.
\end{itemize}
In searching for a decomposition of $\rho^{(6)}_{5,X}$ into a direct sum these invariant subspaces could be complements, respectively, for the subspaces $V_{inv}=V_{\{1,2\}}$ (case $W=V_{\{1\}}$) and $V_{inv}=V_{\{2,3,6\}}$ (case $W=V_{\{2,3\}}$). As we see, this does not happen.  In all other reducible regimes with
$W\neq \emptyset$ representations $\rho^{(6)}_{\dots}$ turn to be indecomposable.
\medskip

It lasts considering case $W=\emptyset$.
Assuming that $V_{inv}$ is 2-dimensional, i.e. $V_{inv}=V_{\{5,6\}}$, we get a contradiction since block-triangularity conditions for $\rho^{(6)}_{5,X}$: $G_{13}=G_{23}=0$ do not have any solution.

Still, there is a possibility to find 1-dimensional space $V_{inv}$. This happens if $2\times 2$ matrices $G_{13}$, $G_{23}$ and $G_{33}$ for certain values of parameters $x_i$ have common eigenspace $V_{inv}$, which is a null space for $G_{13}$ and $G_{23}$. Calculating determinants of $G_{13}$ and $G_{23}$:
$$
\det G_{13}\sim K^{(6)}_{5,1234} J^{(6)}_{35} J^{(6)}_{45} (e_5(X)+ x_5^5), \quad
\det G_{23}\sim J^{(6)}_{15} J^{(6)}_{25} (e_5(X)+ x_5^5),
$$
we see that the only new possible regime where one observes nontrivial invariant subspace is given by condition
$I^{(6)}_5=0$. Indeed, in this case one finds common eigenvector
$$
\{ (x_5^2+x_2 x_3)(x_5^2-x_1 x_3)(x_2^2-x_2 x_5+x_5^2),\, x_1 x_3(x_1^2-x_1 x_5+x_5^2)\},
$$
with eigenvalues $0$, $0$ and $x_5$, respectively, for $G_{13}$, $G_{23}$ and $G_{33}$. The invariant subspace generated by this vector does not have an invariant direct summand, as there is no invariant subspaces containing $V_{\{1,2,3,4\}}$.

\end{proof}

Our main result follows as a direct consequence of propositions \ref{prop2} and \ref{prop3}:

\begin{theor}
\lb{theorem}
For $|X|\leq 5$ algebra $Q_X$ (\ref{quot-alg}) defined by a set of data $X$ (\ref{X}) is semisimple iff:\medskip
\ba
\lb{simp-2}
|X|=2: && I^{(2)}_{12}\neq 0;
\\[5pt]
\lb{simp-3}
|X|=3: && \{I^{(2)}_{ij}, I^{(3)}_{ijk}\}\cap \{0\}=\emptyset \;\;
\mbox{\small for all pairwise distinct indices $\scriptstyle i,j,k\in \{1,2,3\}$};
\\[5pt]
\lb{simp-4}
|X|=4: && \{I^{(2)}_{ij}, I^{(3)}_{ijk}, I^{(4)}_{h,i}, J^{(4)}_{h,ijkl}\}\cap \{0\}=\emptyset
\\
\nn
&&
\forall h: h^2= e_4(X), \mbox{\small and for all pairwise distinct indices $\scriptstyle i,j,k,l\in \{1,2,3,4\}$};
\\[5pt]
\lb{simp-5}
|X|=5:  && \{I^{(2)}_{ij}, I^{(3)}_{ijk}, I^{(4)}_{h,i}, J^{(4)}_{h,ijkl},
I^{(5)}_{f,i}, J^{(5)}_{f,ij}, I^{(6)}_i, J^{(6)}_{ij}, K^{(6)}_{i,jklm}\}\cap \{0\}=\emptyset \;\;
\\
\nn
&&\forall f: f^5=e_5(X),\;\;\; \forall h: h^2= e_4(X^{\setminus i}),
\\
\nn
&&
\mbox{\small and for all pairwise distinct indices $\scriptstyle i,j,k,l,m\in \{1,2,3,4,5\}$}.
\ea

In the semisimple case all irreducible representations of these algebras are described in proposition \ref{prop2}.
\end{theor}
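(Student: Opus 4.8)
The plan is to read the theorem as a direct assembly of Proposition \ref{prop3} (which decides irreducibility of every representation in Proposition \ref{prop2}) with two standard inputs: the known dimension of $Q_X$ and the Wedderburn count. By the freeness result quoted in the Introduction, $\dim Q_X$ equals the order of the corresponding Coxeter quotient group, i.e. $6,24,96,600$ for $|X|=2,3,4,5$. For any finite-dimensional algebra $A$ over ${\Bbb C}$ one has $A/\mathrm{rad}\,A\cong\bigoplus_s M_{d_s}({\Bbb C})$, where $d_s$ runs over the dimensions of the pairwise inequivalent irreducible modules; hence $\sum_s d_s^2\leq\dim A$ with equality precisely when $\mathrm{rad}\,A=0$, that is, iff $A$ is semisimple. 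Equivalently, $A$ is semisimple iff it admits no reducible indecomposable module. These two reformulations drive the two directions of the proof.

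First I would prove the ``if'' direction. Assume all inequalities listed for a given $|X|$ hold. By Remark 1 every representation $\rho^{(d)}_{\dots}$ attached to a subset $X'\subseteq X$ is a representation of $Q_X$, and by Proposition \ref{prop3} all of them---over every admissible choice of the roots $h,f$ and of the distinguished eigenvalue---are irreducible. I would then check they are pairwise inequivalent: representations of different dimension are trivially distinct, and within a fixed dimension they are separated by invariants, namely the eigenvalue multiset of $\rho(g_1)$ (which distinguishes different subsets $X'$, and distinguishes the six-dimensional $\rho^{(6)}_{i,X}$ by which eigenvalue is doubled) together with the central scalar $C_\rho$ of (\ref{ABC}) (which separates the $\sqrt{e_4}$, $\sqrt[5]{e_5}$ and $-x_i e_5$ branches in dimensions $4,5,6$, since these give $C_\rho$-values $\pm h^3$, $f\cdot e_5$ and $-x_i e_5$ respectively, all distinct). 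A direct enumeration then yields the sum of squared dimensions: for $|X|=5$ the one- through six-dimensional families contribute $\binom{5}{1}1^2$, $\binom{5}{2}2^2$, $\binom{5}{3}3^2$, $2\binom{5}{4}4^2$, $5\cdot 5^2$ and $5\cdot 6^2$, totalling $5+40+90+160+125+180=600=\dim Q_X$; the analogous tallies give $6,24,96$ for $|X|=2,3,4$. Since $\sum_s d_s^2\leq\dim Q_X$ always holds, this forces equality, whence $Q_X$ is semisimple and, simultaneously, the listed representations form a complete set of irreducibles---which settles the last sentence of the theorem.

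For the converse I would argue by contraposition. If any single one of the listed quantities vanishes, then by Proposition \ref{prop3} the corresponding representation of dimension $d\leq 5$ is reducible but indecomposable, or (in the $I^{(6)},J^{(6)},K^{(6)}$ cases) at least one of the five six-dimensional representations $\rho^{(6)}_{s,X}$ is reducible but indecomposable. Viewed as a $Q_X$-module through Remark 1, such a module is again reducible and indecomposable. A semisimple algebra admits only completely reducible modules, so the mere existence of this module shows $Q_X$ is not semisimple. Combining the two directions gives the stated equivalence.

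I expect the genuine obstacle to be bookkeeping rather than any deep step: one must verify that the quantifiers in the theorem (all subsets $X'\subseteq X$, all roots $h,f$, all index permutations) are in exact correspondence with the parameters indexing the representations of Proposition \ref{prop2}, so that each failing inequality is realized by a specific representation on the list and each surviving representation is counted exactly once in the dimension tally. The one point requiring real care is the six-dimensional block, where Proposition \ref{prop3} guarantees irreducibility of the whole family only under the \emph{collective} nonvanishing of $I^{(6)},J^{(6)},K^{(6)}$; here I would confirm both that this collective condition is what is needed to close the count at $600$ and that any violation indeed produces a reducible indecomposable summand, exactly as asserted there.
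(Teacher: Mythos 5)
Your proposal is correct and follows essentially the same route as the paper's own proof: the ``only if'' direction via the reducible-but-indecomposable representations of Proposition \ref{prop3} (pulled back to $Q_X$-modules as in Remark 1), and the ``if'' direction via the Artin--Wedderburn dimension count $\sum_s d_s^2 = \dim Q_X$ using the quoted dimensions $6,24,96,600$, with exactly the same tally $5+40+90+160+125+180=600$ in the $|X|=5$ case. Your explicit verification of pairwise inequivalence (via the spectrum of $\rho(g_1)$ and the central scalar $C_\rho$) is a detail the paper leaves implicit, but it does not change the argument.
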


\ni {\bf Remark 3.}
{\small\rm
For the algebras $Q_X$, $|X|=2,3,4$, the statement of theorem was first proved in \cite{TW} (see Theorem 2.9 there).
For the algebras $Q_x$, $|X|=5$, polynomial conditions of the form $I^{(6)}_i = 0$, $J^{(6)}_{ij}=0$, $K^{(6)}_{i,jklm}=0$
have appear recently in the investigations of the algebra decomposition matrices (see \cite{Ch2}, Section 3.15).
}

\begin{proof}
Existence of reducible but indecomposable representations assumes nonsemisimplicity of an algebra. All the algebras $Q_X$ which the theorem states to be nonsemisimple obey such representations according to proposition \ref{prop3}.

On the other hand, by the Artin-Wedderburn theorem an algebra over an algebraically closed field is semisimple if and only if sum of squares of dimensions of its inequivalent irreducible representations equals dimension of the algebra.
Dimensions of the algebras $Q_X$ for $|X|=2,3,4,$ and $5$ are, respectively, 6, 24, 96, and 600 (see \cite{M1}, Theorem 3.2(3), and \cite{Ch1}, Corollaries 3.4 and 4.11).
Then, propositions \ref{prop2} and \ref{prop3} provide enough irreducible representations for algebras $Q_X$
to guarantee their semisimplicity under conditions (\ref{simp-2})--(\ref{simp-5}).
For instance in case $|X|=5$ the algebra $Q_X$  under conditions (\ref{simp-5}) has following inequivalent irreducible representations (see proposition \ref{prop2} and remark
1):
${5 \choose 1}=5$ times 1-dimensional, ${5\choose 2}=10$ times 2-dimensional, ${5\choose 3}=10$ times 3-dimensional, $2\times {5 \choose 4}=10$ times 4-dimensional, $5$ times 5-dimensional, and $5$ times 6-dimensional. Altogether:
$5*1^2+10*2^2+10*3^2+10*4^2+5*5^2+5*6^2=600$ that fits the dimension of the algebra and proves its semisimplicity.
\end{proof}

\end{document}